%%%%%%%%%%%%%%%%%%%%%%%%%%%%%%%%%%%%%%%%%%%%%%%%%%%%%%%%%%%%%%%%%%%%%%%%%%%%%
%%%%                                                                     %%%%
%%%%               Symmetry of k-marked Durfee Symbols                   %%%%
%%%%                                                                     %%%%
%%%%%%%%%%%%%%%%%%%%%%%%%%%%%%%%%%%%%%%%%%%%%%%%%%%%%%%%%%%%%%%%%%%%%%%%%%%%%

\documentclass[12pt]{amsart}

\usepackage{latexsym}
\usepackage{psfrag}
\usepackage{amsmath}
\usepackage{amssymb}
\usepackage{epsfig}
\usepackage{amsfonts}
\usepackage{amscd}
\usepackage{mathrsfs}
\usepackage{graphicx}
\usepackage{enumerate}

\oddsidemargin 0in
\textwidth 6.5in
\evensidemargin 0in
\topmargin -.4in
\textheight 9in

%%%%%%%%%%*******************************************%%%%%%%%%%%%%%%%%%%
%%%%%%%%%%*******************************************%%%%%%%%%%%%%%%%%%%
%%%%%%%%%%*******************************************%%%%%%%%%%%%%%%%%%%

%\pagestyle{plain}

\parindent0pt
\parskip1.6ex

%\widowpenalty=6500
%\clubpenalty=6500

%%%%%%%%%%*******************************************%%%%%%%%%%%%%%%%%%%
%%%%%%%%%%*******************************************%%%%%%%%%%%%%%%%%%%
%%%%%%%%%%*******************************************%%%%%%%%%%%%%%%%%%%

\newtheorem{theorem}{Theorem}%[section]
\newtheorem{lemma}[theorem]{Lemma}

\newtheorem{coro}[theorem]{Corollary}

\newtheorem{conj}[theorem]{Conjecture}

%%%%%%%%%%%%%%%%%%%%%%%%%%%%%%%%%%%%%%%%%%%%%%%%%%%%%%%%%%%%%%%%%%%%%%%%
\begin{document}%%%%%%%%%%%%%%%%%%%%%%%%%%%%%%%%%%%%%%%%%%%%%%%%%%%%%%%%
%%%%%%%%%%%%%%%%%%%%%%%%%%%%%%%%%%%%%%%%%%%%%%%%%%%%%%%%%%%%%%%%%%%%%%%%

\title[Radial Limits]{Radial Limits of Partial Theta and Similar Series}

\author[Kur\c{s}ung\"{o}z]{Ka\u{g}an Kur\c{s}ung\"{o}z}
\address{Faculty of Engineering and Natural Sciences, Sabanc{\i} University, \.{I}stanbul, Turkey}
\email{kursungoz@sabanciuniv.edu}

%    For articles to be published after 1 January 2010, you may use
%    the following version:
\subjclass[2010]{Primary 11Y35}

\keywords{Radial Limits, Partial Theta Series, $q$-integral}

\date{April 2015}

\begin{abstract}
\noindent
We study unilateral series in a single variable $q$ where its exponent is an unbounded increasing function, 
and the coefficients are periodic.  
Such series converge inside the unit disk.  
Quadratic polynomials in the exponent correspond to partial theta series.  
We compute limits of those series as the variable tends radially to a root of unity.  
The proofs use ideas from the $q$-integral and are elementary.  
\end{abstract}

\maketitle

%%%%%%%%%%%%%%%%%%%%%%%%%%%%%%%%%%%%%%%%%%%%%%%%%%%%%%%%%%%
%%%%                                                   %%%%
%%%%                   Introduction                    %%%%
%%%%                                                   %%%%
%%%%%%%%%%%%%%%%%%%%%%%%%%%%%%%%%%%%%%%%%%%%%%%%%%%%%%%%%%%

\section{Introduction}
\label{secIntro}

Consider the series 
\begin{align*}
% \label{eqGeom1}
  \sum_{n \geq 0} (-1)^n q^n
  = 1 - q + q^2 - q^3 + \cdots
\end{align*}
which converges for $\vert q \vert < 1$.  
This is a geometric series, so
\begin{align*}
% \label{eqGeom2}
  \sum_{n \geq 0} (-1)^n q^n
  = \frac{1}{1 + q}.  
\end{align*}
We can easily compute 
\begin{align*}
% \label{eqGeom3}
  \lim_{q \to 1^-} \sum_{n \geq 0} (-1)^n q^n
  = \lim_{q \to 1^-} \frac{1}{1 + q} 
  = \frac{1}{2}.  
\end{align*}

In fact, we can quantify the rate of convergence, as well.  
By letting $q = e^{-x}$, $q \to 1^-$ corresponds to $x \to 0^+$, and 
\begin{align*}
% \label{eqGeomChangeOfVar}
  \frac{1}{1 + q}
  = \frac{1}{ 1 + e^{-x}} 
  =: f(x).  
\end{align*}
Because
\begin{align*}
% \label{eqDerivativesOfF}
  f'(0) = & (-1)(1 + e^{-x})^{-2}e^{-x} \vert_{x = 0} = - \frac{1}{4},  \\ 
  f''(0) = & 2(1 + e^{-x})^{-3}e^{-x} + (1 + e^{-x})^{-2} e^{-x} \vert_{x = 0} = \frac{1}{2}, \\
  & \vdots
\end{align*}
\begin{align*}
% \label{eqFAsymp}
  f(x) \approx \frac{1}{2} - \frac{x}{4} + \frac{x^2}{4} + \cdots
\end{align*}
as $x \to 0^+$,  or 
\begin{align}
\label{eqGeomAsymp}
  \sum_{n \geq 0} (-1)^n q^n \approx \frac{1}{2} - \frac{(- \log q)}{4} + \frac{(- \log q)^2}{4} + \cdots
\end{align}
as $q \to 1^-$.  
In other words, 
\begin{align*}
% \label{eqAsymExample1}
  \left( \sum_{n \geq 0} (-1)^n q^n\right) - \frac{1}{2} 
  = O(-\log q), \quad \textrm{ and} \quad 
  \lim_{q \to 1^-} \frac{\left( \sum_{n \geq 0} (-1)^n q^n\right) - \frac{1}{2}}{ -\log q}
  = - \frac{1}{4}, 
\end{align*}
and 
\begin{align*}
% \label{eqAsymExample1}
  & \left( \sum_{n \geq 0} (-1)^n q^n\right) - \frac{1}{2} + \frac{(-\log q)}{4} 
  = O( (-\log q)^2 ), \\ \\ & \textrm{ and} \quad 
  \lim_{q \to 1^-} \frac{\left( \sum_{n \geq 0} (-1)^n q^n\right) - \frac{1}{2} + \frac{(-\log q)}{4} }
		{ (-\log q)^2 }
  = \frac{1}{2}, 
\end{align*}
and so on.  
The right hand side of \eqref{eqGeomAsymp} is called the asymptotic expansion 
of the series on the left hand side as $q \to 1^-$ \cite[Ch. 15]{RNBII}.  

The problem already becomes harder when we try to do the same for a series such as 
\begin{align}
\label{eqPartialTheta}
  \sum_{n \geq 0} (-1)^n q^{n^2}.  
\end{align}
This unilateral sum is called a partial theta series, since its bilateral version is a theta series.  
We cannot get away with merely taking successive derivatives, 
since we do not have a closed form of \eqref{eqPartialTheta}.  
One solution is to use Euler's integral formula \cite[Ch. 13, eq. (10.5)]{RNBII}. 
Coefficients in the asymptotic expansions of both \eqref{eqGeomAsymp} and \eqref{eqPartialTheta}
are explicitly given in \cite{LZ}.  
In particular, 
\begin{align*}
% \label{eqPartialThetaLimit}
 \lim_{q \to 1^-} \sum_{n \geq 0} (-1)^n q^{n^2}
 = \frac{1}{2}.  
\end{align*}
Their method is to use Euler's integral formula or computing certain contour integrals.  

We will elementarily prove that 
\begin{align*}
% \label{eqGeneralSeriesLimit}
 \lim_{q \to 1^-} \sum_{n \geq 0} (-1)^n q^{s(n)}
 = \frac{1}{2} 
\end{align*}
for any polynomial $s(n)$ with arbitrary positive degree and positive leading coefficient, 
borrowing ideas from the $q$-integral \cite[Sec. 10.1]{AAR}, \cite[Sec 1.11]{GR}.  

We will also show that this phenomenon does not necessarily occur for functions $s(n)$
with exponential growth.  For instance, we will prove that 
\begin{align*}
% \label{eqGeneralSeriesExpoExpoLimit}
 \lim_{q \to 1^-} \sum_{n \geq 0} (-1)^n q^{a^n}
\end{align*}
cannot exist for large enough $a$.  
This is interesting in the sense that replacing $a^n$ 
with the Taylor polynomial of any degree approximating it (in $n$)
will yield $1/2$ for this limit.  

For asymptotic expansions, the $q$-integral, unfortunately, does not readily help.  
We have to resort to Euler's integral formula, 
or Mellin transform and computing contour integrals \cite[Ch. 15]{RNBII}.  

The paper is organized as follows.  
We introduce the idea of the $q$-integral in Section \ref{secPrelim}, 
present the main results in Section \ref{secMain}, 
give some applications in Section \ref{secRadial}, 
and finally discuss asymptotic expansions in Section \ref{secAsymp}.  

\section{Preliminaries}
\label{secPrelim}

Suppose $f:[0, 1] \to [0, 1]$ is a continuous, therefore Riemann integrable function.  
To compute $\int_0^1 f(x) \mathrm{d}x$, 
instead of a finite subdivision of the domain $[0, 1]$, 
we can consider the subintervals
\begin{align*}
% \label{eqSubdivision}
  \ldots, \quad [q^3, q^2], \quad [q^2, q], \quad [q, 1]
\end{align*}
for $q \in (0, 1)$.  
Using the right-hand endpoints of these intervals, 
we can write an approximating sum for our integral.  
\begin{align*}
% \label{eqApproxInt}
  \int_0^1 f(x) \, \mathrm{d} x 
  \approx \sum_{n \geq 0} f(q^n) (q^n - q^{n+1})
  = (1 - q) \sum_{n \geq 0} f(q^n) q^n
\end{align*}

\begin{center}
 \includegraphics[scale=0.5]{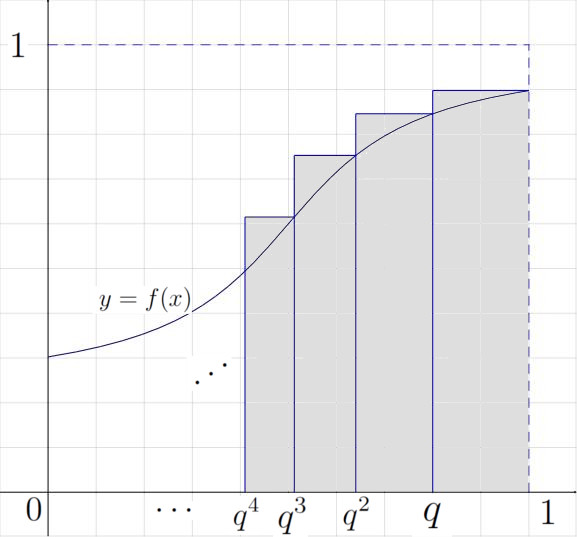}
\end{center}

As $q \to 1^{-}$, the approximations get more accurate, and
\begin{align*}
% \label{eqLimitInt}
  \int_0^1 f(x) \, \mathrm{d} x 
  = \lim_{q \to 1^{-}} (1 - q) \sum_{n \geq 0} f(q^n) q^n
\end{align*}

For instance, let $c$ be any positive real number, and $f(x) = x^c$ for $0 < x < 1$.  
\begin{align*}
% \label{eqExampleInt}
  \int_0^1 x^c \, \mathrm{d} x 
  = \lim_{q \to 1^{-}} (1 - q) \sum_{n \geq 0} (q^{c+1})^n
  = \lim_{q \to 1^{-}} \frac{1 - q}{ 1 - q^{c+1} } 
  = \frac{1}{c+1}, 
\end{align*}
using L'H\^{o}spital's rule.  

This is called the $q$-integral.  
It works in greater generality, and can handle improper integrals as well.  
More information on the $q$-integral can be found in \cite[Sec. 10.1]{AAR} or \cite[Sec 1.11]{GR}.  
We will be content with sums approximating integrals of continuous functions $f:[0, 1] \to [0, 1]$.  

\section{Main Results}
\label{secMain}

We assume $q \in (0, 1)$ throughout this section.  

\begin{lemma}
\label{lemmaMain}
  Let $x(t)$, $y(t)$ be eventually increasing real functions such that 
  \begin{equation*}
  %\label{eqMainLemmaConditions}
    \lim_{t \to \infty} x(t)
    = \lim_{t \to \infty} y(t)
    = \infty, 
    \qquad
    \lim_{t \to \infty} \frac{y(t-1)}{y(t)}
    = 1, 
    \quad \textrm{and } \quad 
    \lim_{t \to \infty} \frac{x(t)}{y(t)} = c  
  \end{equation*}
  for some $0 < c < \infty$.  Then
  \begin{equation*}
  %\label{eqMainLimit}
    \lim_{q \to 1^{-}} \sum_{n \geq 0} q^{y(n)} \left( q^{x(n)} - q^{x(n+1)} \right)
    = \frac{1}{1+c}.  
  \end{equation*}
\end{lemma}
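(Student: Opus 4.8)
The plan is to read the sum as a Riemann-sum approximation of the kind introduced in Section~\ref{secPrelim} and to let the mesh shrink as $q \to 1^{-}$. Put $w_n = q^{x(n)}$. Since $x$ is eventually increasing with $x(n) \to \infty$, the nodes eventually decrease and $w_n \to 0$, so the points $w_n$ dissect $(0,1]$ from the right and the quantities $q^{x(n)} - q^{x(n+1)} = w_n - w_{n+1} > 0$ are the lengths of the subintervals $[w_{n+1}, w_n]$. The factor $q^{y(n)}$ is the value attached to the node $w_n$: writing $q^{y(n)} = w_n^{\,y(n)/x(n)}$, the limiting ratio of the exponents makes this value asymptotic to $w_n^{\,c}$. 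Hence the sum is, up to controllable errors, the Riemann sum $\sum_n f(w_n)\,(w_n - w_{n+1})$ for $f(w) = w^{c}$, and the target value is
\begin{equation*}
  \int_0^1 w^{c}\,\mathrm{d}w = \frac{1}{1+c}.
\end{equation*}

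To turn this heuristic into a proof I would proceed in three steps. First, I would record that the three hypotheses together force $x(n-1)/x(n) \to 1$: indeed $\frac{x(n-1)}{x(n)} = \frac{x(n-1)}{y(n-1)}\cdot\frac{y(n-1)}{y(n)}\cdot\frac{y(n)}{x(n)}$, where the two outer factors tend to reciprocal finite nonzero limits and the middle factor tends to $1$, so the product tends to $1$. Second, I would show that the mesh of the partition tends to $0$ as $q \to 1^{-}$. Writing $q = e^{-\varepsilon}$, the mean value theorem gives $w_n - w_{n+1} \leq \varepsilon\,\big(x(n+1) - x(n)\big)\,e^{-\varepsilon x(n)}$; for the nodes carrying non-negligible length $\varepsilon x(n)$ is bounded, which as $\varepsilon \to 0^{+}$ forces their index $n$ to be large, and there $x(n+1)/x(n) - 1$ is small, so $\varepsilon\big(x(n+1)-x(n)\big) = \varepsilon x(n)\big(x(n+1)/x(n)-1\big)$ is uniformly small. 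Third, I would replace $q^{y(n)}$ by $w_n^{\,c}$, bounding the error through the ratio hypothesis, and discard the contribution of the tail (where $w_n$ is near $0$ and every summand is tiny, since $0 \le q^{y(n)} \le 1$) together with the finitely many initial indices before monotonicity sets in. With the mesh tending to $0$ and the integrand controlled, standard Riemann-sum convergence then delivers the integral above.

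The main difficulty is that this is genuinely a double limit: both the dissection $\{w_n\}$ and the fit of $q^{y(n)}$ to the power $w_n^{\,c}$ depend on $q$, so I cannot simply invoke convergence of Riemann sums for one fixed dissection. The crux is therefore the uniform mesh estimate of the second step, which is exactly where the hypothesis $\lim_{t\to\infty} y(t-1)/y(t) = 1$ (through $x(n-1)/x(n) \to 1$) is indispensable. For exponentially growing exponents this ratio does not tend to $1$, the mesh stays bounded away from $0$, and the conclusion genuinely fails, consistent with the examples promised in the introduction. A secondary technical point is to make the replacement $q^{y(n)} \approx w_n^{\,c}$ uniform over the nodes carrying appreciable mass; splitting the index range into an initial block, a bulk where the asymptotics are already accurate, and a negligible tail handles this cleanly.
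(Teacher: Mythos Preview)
Your overall strategy is the same as the paper's---read the sum as an approximating sum for $\int_0^1 x^{c}\,\mathrm{d}x$ in the spirit of the $q$-integral---but the technical execution differs. The paper does not attempt to show directly that the mesh of the partition $\{w_n\}$ shrinks or to replace $q^{y(n)}$ by $w_n^{c}$ with a uniform error. Instead it sandwiches the sum between two genuine integrals: defining the continuous curves $\varphi(t)=(q^{x(t)},q^{y(t)})$ and $\psi(t)=(q^{x(t)},q^{y(t-1)})$, one has for each rectangle
\[
\int_{n}^{n+1} q^{y(t)}\,\mathrm{d}\,q^{x(t)} \;\le\; q^{y(n)}\bigl(q^{x(n)}-q^{x(n+1)}\bigr) \;\le\; \int_{n}^{n+1} q^{y(t-1)}\,\mathrm{d}\,q^{x(t)},
\]
and summing over $n$ gives two bounding integrals over $(0,q^{x(N+1)}]$. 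The paper then fixes $x_0\in(0,1)$, lets $q\to1^-$, and checks that both curves converge pointwise to the graph $y=x^{c}$; dominated convergence forces both bounding integrals to $\int_0^1 x^{c}\,\mathrm{d}x$, and the squeeze theorem finishes. Your route---mesh control via $x(n+1)/x(n)\to1$ together with a uniform replacement $q^{y(n)}\approx w_n^{c}$---is workable and perhaps more elementary, but it requires exactly the delicate bookkeeping you flag in your last paragraph (splitting indices into an initial block, a bulk, and a tail, and making the double limit uniform). The paper's sandwich bypasses this: once the two bounding curves are in place, all of the uniformity is absorbed into a single appeal to dominated convergence, and the hypothesis $y(t-1)/y(t)\to1$ enters only to make the \emph{upper} curve $\psi$ land on the same limiting graph as $\varphi$.
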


\begin{proof}
  Determine $N \in \mathbb{N}$ such that both $x(t)$ and $y(t)$ are positive and increasing for $t \geq N$.  
  Then for all $q \in (0, 1)$, and $n \geq N+1$, $q^{y(n)} \left( q^{x(n)} - q^{x(n+1)} \right)$
  is the area of the rectangle $R(n)$ with vertices 
  $(q^{x(n)}, 0)$, $(q^{x(n+1)}, 0)$, $(q^{x(n)}, q^{y(n)})$ and $(q^{x(n+1)}, q^{y(n)})$.  
  
  \begin{center}
    \includegraphics[scale=0.5]{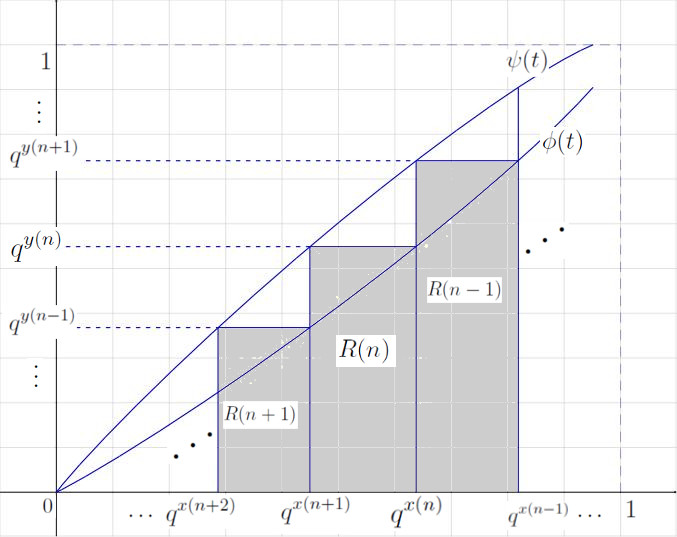}
  \end{center}
  
  Define $\varphi(t) = (q^{x(t)}, q^{y(t)})$, and $\psi(t) = (q^{x(t)}, q^{y(t-1)})$ for $t\geq N+1$.
  Then, for all $n \geq N+1$
  \begin{equation*}
  %\label{eqMainProofIneq1}
    q^{y(t)} \leq q^{y(n)} \leq q^{y(t-1)}
  \end{equation*}
  when $n \leq t \leq n+1$.  
  In other words, the curve defined by $\varphi(t)$ is below the top side of the rectangle $R(n)$, 
  and the curve defined by $\psi(t)$ is above it for $n \leq t \leq n+1$.  Thus, 
  \begin{equation*}
  %\label{eqMainLemmaRectangleAreaAndIntegrals}
    \int_{t = n}^{n+1} q^{y(t)} \; \mathrm{d} \, q^{x(t)}
    \leq q^{y(n)} \left( q^{x(n)} - q^{x(n+1)} \right) 
    \leq \int_{t = n}^{n+1} q^{y(t-1)} \; \mathrm{d} \, q^{x(t)}
  \end{equation*}
  for $n \geq N+1$.  Summing over those $n$, we find 
  \begin{equation*}
  %\label{eqMainLemmaRectangleAreaAndIntegrals}
    \int_{t = N+1}^{\infty} q^{y(t)} \; \mathrm{d} \, q^{x(t)}
    \leq \sum_{n \geq N+1} q^{y(n)} \left( q^{x(n)} - q^{x(n+1)} \right) 
    \leq \int_{t = N+1}^{\infty} q^{y(t-1)} \; \mathrm{d} \, q^{x(t)}
  \end{equation*}
  The integrals on either end are convergent because both curves 
  $\varphi(t)$ and $\psi(t)$ determine functions 
  the graph of which are inside the unit square $[0, 1] \times [0, 1]$ for $t \geq N+1$.  
  
  Now, for an arbitrary but fixed $x_0 \in (0, 1)$, 
  if $x_0 = q^{x(t)}$, then $ x(t) = { (- \log x_0) }/{ (- \log q) }$.  
  Because $ \lim_{q \to 1^{-}} { (- \log x_0) }/{ (- \log q) } = \infty$, 
  $x(t)$ can be made as large as possible, 
  by choosing $q$ close enough to 1.  
  Then $t$ is uniquely determined because $x(t)$ is increasing for large enough $t$.  
  For the same $t$, let $y_0 = q^{y(t)}$.  
  Then, $y(t) = (-\log y_0)/(-\log q)$, and hence $x(t)/y(t) = (-\log y_0)/(-\log x_0)$.  
  Because $\lim_{t \to \infty} x(t) / y(t) = c$, 
  and $t \to \infty$ as $q \to 1^{-}$, $y_0 \sim x_0^c$ as $q \to 1^{-}$.  
  
  Similar computations for $y_1 = q^{y(t-1)}$ for $t$ as determined in the previous paragraph bring
  $y_1 \sim x_0^c$ as $q \to 1^{-}$ thanks to the constraint $\lim_{t \to \infty} y(t)/y(t-1) = 1$.  
  
  Consequently, both $\varphi(t)$ and $\psi(t)$ converge pointwise to $y = x^c$ for $0 < x < 1$.  
  Lebesgue's dominated convergence theorem \cite[Theorem 1.34]{Rudin} yields
  \begin{equation*}
  %\label{eqBoundingIntegralsDCT}
    \lim_{q \to 1^{-}} \int_{t = N+1}^{\infty} q^{y(t)} \; \mathrm{d} \, q^{x(t)}
    = \lim_{q \to 1^{-}} \int_{t = N+1}^{\infty} q^{y(t-1)} \; \mathrm{d} \, q^{x(t)}
    = \int_0^1 x^c \mathrm{d} \, x
    = \frac{1}{1+c}.  
  \end{equation*}

  Finally, the squeeze theorem (or the sandwich lemma) \cite[p.68]{Stewart} ensures 
  \begin{equation*}
  %\label{eqSandwichLemmaApplied}
    \lim_{q \to 1^{-}} \sum_{n \geq N+1} q^{y(n)} \left( q^{x(n)} - q^{x(n+1)} \right) 
    = \frac{1}{1+c}.  
  \end{equation*}
  The proof is complete once we observe that 
  \begin{equation*}
  %\label{eqLimitOfHeadOfSeries}
    \lim_{q \to 1^{-}} \sum_{n = 0}^N q^{y(n)} \left( q^{x(n)} - q^{x(n+1)} \right) = 0.  
  \end{equation*}
\end{proof}

Although $\lim_{q^{-}} \sum_{n = 0}^\infty q^{y(n)} \left( q^{x(n)} - q^{x(n+1)} \right)$ 
is not exactly the $q$-integral of the function $y = x^c$ for $0 < x < 1$, 
it is interpreted as an approximating sum.  
At least the spirit of the proof is the $q$-integral.  

Notice that the conditions for Lemma \ref{lemmaMain} are satisfied by any pair of real polynomials
$x(t)$ and $y(t)$ with the same positive degree and positive leading coefficients, 
and they are not satisfied if $y(t)$ is a function with exponential growth.  
Armed with these observations, let us draw some conclusions.  

\begin{theorem}
\label{thmPeriodicMeanZeroCoeffs}
  Suppose $s(n)$ is a real polynomial with positive degree $d$ 
  and positive lading coefficient.  
  Let $C: \mathbb{N} \to \mathbb{C}$ be a periodic function 
  with period $k$ and mean zero.  
  (i.e. $C(n) = C(n+k)$ for all $n \in \mathbb{N}$, 
  and $C(1) + \cdots + C(k) = 0$).  
  Then
  \begin{equation*}
  %\label{eqThmPeriodicMeanZeroCoeffs}
    \lim_{q \to 1^{-}} \sum_{n \geq 0} C(n) q^{s(n)} 
    = \frac{-C(1) - 2C(2) - \cdots - (k-1)C(k-1)}{k}.  
  \end{equation*}
\end{theorem}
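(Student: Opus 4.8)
The plan is to group the terms of $\sum_{n\ge 0} C(n)q^{s(n)}$ according to the residue of $n$ modulo $k$ and to reduce everything to differences of the ``block sums'' $P_j(q):=\sum_{m\ge 0} q^{s(mk+j)}$ for $0\le j\le k-1$. Since $C$ is bounded and $s$ is an eventually increasing polynomial, $\sum_{n\ge 0}|C(n)|q^{s(n)}<\infty$ for each $q\in(0,1)$, so the regrouping is legitimate and $\sum_{n\ge 0}C(n)q^{s(n)}=\sum_{j=0}^{k-1}C(j)P_j(q)$, where I read the period as $C(0),\dots,C(k-1)$ with $C(0)=C(k)$. Each individual $P_j(q)$ diverges as $q\to 1^-$, but the mean-zero hypothesis $\sum_{j=0}^{k-1}C(j)=0$ lets me subtract a common divergent part: I would write $\sum_{j=0}^{k-1}C(j)P_j(q)=\sum_{j=1}^{k-1}C(j)\bigl(P_j(q)-P_0(q)\bigr)$, the $j=0$ term dropping out. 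This turns the problem into evaluating the $k-1$ finite limits $\lim_{q\to 1^-}\bigl(P_j(q)-P_0(q)\bigr)$.

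The heart of the argument is the claim that, for each fixed $1\le j\le k-1$,
\begin{equation*}
\lim_{q\to 1^-}\bigl(P_0(q)-P_j(q)\bigr)=\lim_{q\to 1^-}\sum_{m\ge 0}\bigl(q^{s(mk)}-q^{s(mk+j)}\bigr)=\frac{j}{k}.
\end{equation*}
To prove this I would force the summand into the shape demanded by Lemma~\ref{lemmaMain} by introducing auxiliary functions $x(m),y(m)$ with $y(m)+x(m)=s(mk)$ and $y(m)+x(m+1)=s(mk+j)$; equivalently $x(m+1)-x(m)=s(mk+j)-s(mk)$ (a polynomial of degree $d-1$ in $m$) together with $y(m)=s(mk)-x(m)$. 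Summing the first relation shows that $x(m)$ is a genuine polynomial in $m$ of degree $d$ with leading coefficient $a_d k^{d-1} j$, where $a_d>0$ is the leading coefficient of $s$; then $y(m)$ is a polynomial of degree $d$ with leading coefficient $a_d k^{d-1}(k-j)$, which is positive precisely because $j<k$. Thus $x$ and $y$ are eventually increasing, tend to $\infty$, and have equal degree with positive leading coefficients, so the remark following Lemma~\ref{lemmaMain} guarantees its hypotheses. By construction $q^{y(m)}\bigl(q^{x(m)}-q^{x(m+1)}\bigr)=q^{s(mk)}-q^{s(mk+j)}$, so Lemma~\ref{lemmaMain} evaluates the sum as $\int_0^1 x^{(k-j)/j}\,\mathrm{d}x=j/k$, the exponent $(k-j)/j$ being the limiting ratio of the two leading coefficients.

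Granting the claim, the computation finishes quickly: $\lim_{q\to 1^-}\sum_{n\ge 0}C(n)q^{s(n)}=\sum_{j=1}^{k-1}C(j)\cdot\bigl(-\tfrac{j}{k}\bigr)=-\tfrac1k\sum_{j=1}^{k-1}jC(j)$, which is exactly the asserted value $\bigl(-C(1)-2C(2)-\cdots-(k-1)C(k-1)\bigr)/k$; note that $C(k)=C(0)$ never appears, consistent with the statement. The main obstacle is the middle step: checking that the auxiliary $x,y$ really are admissible polynomials and extracting the correct constant from Lemma~\ref{lemmaMain}. In particular one must keep careful track of which leading-coefficient ratio plays the role of the exponent governing the limiting power function, since it is this ratio---equal to $(k-j)/j$ here---that produces $j/k$ rather than its complement; a reassuring sanity check is $s(n)=n$, $k=3$, $j=1$, where $\sum_{m\ge0}\bigl(q^{3m}-q^{3m+1}\bigr)=\tfrac{1-q}{1-q^3}=\tfrac1{1+q+q^2}\to\tfrac13$. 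Finally, the finitely many initial terms where $s$ may not yet be increasing contribute $0$ in the limit and are absorbed exactly as in the proof of Lemma~\ref{lemmaMain}.
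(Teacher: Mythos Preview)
Your proof is correct and follows essentially the same strategy as the paper's: decompose by residue classes modulo $k$, use the mean-zero hypothesis to rewrite the sum as a linear combination of differences $P_j-P_{j'}$, cast each difference in the form $\sum q^{y(n)}(q^{x(n)}-q^{x(n+1)})$ with polynomial $x,y$, and invoke Lemma~\ref{lemmaMain}. The only cosmetic difference is that the paper chooses $P_{k-1}$ as the common reference (so the differences are $P_j-P_{k-1}$ for $0\le j\le k-2$) whereas you use $P_0$; your care in tracking which leading-coefficient ratio gives the exponent is exactly the delicate point, and your sanity check confirms the correct orientation.
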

If $k$ is a period, then so are $2k$, $3k$ etc.  
The reader can readily verify that the right hand side of the limit is well defined.  

\begin{proof}
 We can rewrite the sum
 \begin{align}
 \label{eqPeriodicDecompWRTCoeff}
  \sum_{n \geq 0} C(n) q^{s(n)}
  & = \sum_{j = 0}^{k-1} C(j) \sum_{n \geq 0} q^{s(nk+j)} \\
  \nonumber
  & = \left( \sum_{j = 0}^{k-2} C(j) \sum_{n \geq 0} q^{s(nk+j)} \right)
    + C(k-1) \sum_{n \geq 0} q^{s(nk+k-1)} \\
  \nonumber
  & = \sum_{j = 0}^{k-2} C(j) \sum_{n \geq 0} q^{s(nk+j)} - q^{s(nk+k-1)}
 \end{align}
 since $C(0) + \cdots + C(k-1) = 0$.  
 
 The next step is to decompose 
 \begin{equation*}
 %\label{eqPeriodicDecompXY}
  \sum_{n \geq 0} q^{s(nk+j)} - q^{s(nk+k-1)}
  = \sum_{n \geq 0} q^{y(n)} \left( q^{x(n)} - q^{x(n+1)} \right).  
 \end{equation*}
 for arbitrary but fixed $j = 0, 1, \ldots, k-2$.  
 Apparently
 \begin{align}
 \label{eqPeriodicDecompXY2}
  x(n) + y(n) = & s(nk + j) \\
  \nonumber
  x(n+1) + y(n) = & s(nk + k-1).  
 \end{align}
 Subtracting the first equation from the second, we have
 \begin{equation*}
 %\label{eqPeriodicDecompXDiff}
  x(n+1) - x(n) = s(nk + k-1) - s(nk + j).  
 \end{equation*}
 For convenience, we assume $x(0) = 0$ 
 and add instances of the last equation 
 for $n-1, n-2, \ldots, 1$ side by side to obtain
 \begin{equation}
 \label{eqPeriodicDecompXSum}
  x(n) = \sum_{l = 0}^{n-1} s(lk + k-1) - s(lk + j).  
 \end{equation}
 
 On the other hand, if $s(n) = a_d n^d + a_{d-1} n^{d-1} + O(n^{d-2})$, then
 \begin{align*}
 %\label{eqPeriodicDecompSSum}
  s(nk+j) = & a_d (nk+j)^d + a_{d-1}(nk+j)^{d-1} + O(n^{d-2}) \\
  = & a_d k^d n^d + d a_d k^{d-1} j n^{d-1} + a_{d-1} k^{d-1} n^{d-1} + O(n^{d-2})
 \end{align*}
 by the binomial theorem.  
 Thus, 
 \begin{equation*}
 %\label{eqPeriodicDecompSDiff}
  s(nk + k - 1) - s(nk + j) 
  = d a_d k^{d-1} (k-1-j) n^{d-1} + O(n^{d-2}).  
 \end{equation*}
 Since $\sum_{l = 0}^{n-1} l^r = l^{r+1}/(r+1) + O(l^r)$ \cite[p.107]{CG}, 
 it follows from \eqref{eqPeriodicDecompXSum} that 
 \begin{equation*}
 %\label{eqPeriodicDecompX}
  x(n) = a_d k^{d-1} (k-1-j) n^d + O(n^{d-1}), 
 \end{equation*}
 and combining this with \eqref{eqPeriodicDecompXY2} that 
 \begin{equation*}
 %\label{eqPeriodicDecompY}
  y(n) = a_d k^{d-1} (j+1) n^d + O(n^{d-1}).  
 \end{equation*}
 In particular, both $x(n)$ and $y(n)$ are real polynomials of the same degree as $s(n)$
 with positive leading coefficients.  
 They satisfy the hypotheses of Lemma \ref{lemmaMain} with 
 \begin{equation*}
 %\label{eqPeriodicXYRatio}
  \lim_{t \to \infty} \frac{y(t)}{x(t)} = \frac{j+1}{k-1-j}, 
 \end{equation*}
 therefore by Lemma \ref{lemmaMain}
 \begin{equation*}
 %\label{eqPeriodicLimitPart}
  \lim_{q \to 1^{-}} \sum_{n \geq 0} q^{s(nk+j)} - q^{s(nk+k-1)}
  = \frac{k-1-j}{k}, 
 \end{equation*}
 and by \eqref{eqPeriodicDecompWRTCoeff}
 \begin{align*}
 %\label{eqPeriodicLimitFull}
  \lim_{q \to 1^{-}} \sum_{n \geq 0} C(n) q^{s(n)} 
  = & \frac{(k-1)C(0) + (k-2)C(1) + \cdots + 2 C(k-2)}{k} \\
  = & \frac{-C(1) - 2C(2) - \cdots - (k-1) C(k-1)}{k}.  
 \end{align*}
\end{proof}

\begin{coro}
\label{coroAlternating}
  Suppose $s(n)$ is a real polynomial with positive degree and positive leading coefficient.  Then, 
  \begin{align*}
  %\label{eqAlternatingSeriesPolyExpoLimit}
    \lim_{q \to 1^{-}} \sum_{n \geq 0} (-1)^n q^{s(n)} = \frac{1}{2}.  
  \end{align*}
\end{coro}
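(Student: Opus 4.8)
The plan is to recognize the alternating sequence $(-1)^n$ as a particular instance of a periodic, mean-zero coefficient sequence, so that the corollary becomes an immediate specialization of Theorem \ref{thmPeriodicMeanZeroCoeffs}. First I would set $C(n) = (-1)^n$ and observe that $C$ is periodic with period $k = 2$, since $C(n+2) = (-1)^{n+2} = (-1)^n = C(n)$ for all $n \in \mathbb{N}$.

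Next I would verify the mean-zero hypothesis over a single period. With $k = 2$ the sum of $C$ over one full period is
\begin{equation*}
  C(0) + C(1) = 1 + (-1) = 0,
\end{equation*}
which is exactly the condition required by the theorem. Thus all hypotheses of Theorem \ref{thmPeriodicMeanZeroCoeffs} hold for this choice of $C$ and for the given polynomial $s(n)$, which by assumption has positive degree and positive leading coefficient.

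With the hypotheses in place, the conclusion follows by substituting $k = 2$ into the formula supplied by the theorem. The right-hand side collapses to a single surviving term, namely
\begin{equation*}
  \frac{-C(1)}{2} = \frac{-(-1)}{2} = \frac{1}{2},
\end{equation*}
which is precisely the claimed limit $\lim_{q \to 1^{-}} \sum_{n \geq 0} (-1)^n q^{s(n)} = 1/2$.

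Since essentially all of the analytic content — the reduction to a $q$-integral approximating sum via Lemma \ref{lemmaMain}, together with the binomial and power-sum bookkeeping that identifies $x(n)$ and $y(n)$ as polynomials of matching degree — is already discharged in the proof of the theorem, there is no genuine obstacle remaining at this stage. The only points requiring any attention are the trivial check that the period-$2$ sequence has vanishing mean and the correct reading of the specialized formula when $k = 2$; neither introduces new difficulty.
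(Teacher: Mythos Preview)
Your proposal is correct and is precisely the intended derivation: the paper states this result as an immediate corollary of Theorem~\ref{thmPeriodicMeanZeroCoeffs} without giving a separate proof, and specializing to $C(n)=(-1)^n$ with period $k=2$ and applying the formula $\frac{-C(1)}{k}=\frac{1}{2}$ is exactly what is meant.
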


This limit previously appeared in literature for $s(n)$ having degrees 1 and 2, 
or $s(n)$ being a monomial with arbitrary degree.  
It appeared in \cite[eq. (5.42)]{BMO} 
for $s(n) = $ quadratic polynomials with constant term zero.  
In fact, Theorem \ref{thmPeriodicMeanZeroCoeffs} for $s(n) = $ specific linear or quadratic polynomials 
are corollaries of one of Lawrence and Zagier's results \cite[p.98]{LZ}.  
Theorem \ref{thmPeriodicMeanZeroCoeffs} is also a corollary of one of Ramanujan's formulas
for $s(n) = $ a monomial with arbitrary degree \cite[Ch 15, Theorem 3.1]{RNBII}.  
However, we did not come across the case $s(n) = $ an arbitrary polynomial in the literature.  

\begin{theorem}
\label{thmPeriodicExpoExpo}
  Let $C:\mathbb{N} \to \mathbb{C}$ be a periodic function with period $k$ and mean zero.  
  Let $a > 1$ be a real number.  Then 
  \begin{equation*}
  %\label{eqThmPeriodicExpoExpo}
    \lim_{q \to 1^{-}} \sum_{n \geq 0} C(n) q^{a^n}
  \end{equation*}
  does not converge for large enough $a$.  
\end{theorem}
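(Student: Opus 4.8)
The plan is to change variables via $q = e^{-\varepsilon}$, reducing the problem to the behaviour of $F(\varepsilon) := \sum_{n \ge 0} C(n)\,e^{-\varepsilon a^n}$ as $\varepsilon \to 0^+$, and then to exhibit two sequences $\varepsilon \to 0^+$ along which $F$ approaches different values. We may assume $C \not\equiv 0$, since otherwise $F \equiv 0$ and the limit trivially exists. Writing $S_n = \sum_{m=0}^n C(m)$ for the partial sums, the mean-zero and periodicity hypotheses give $S_{n+k} = S_n$, so the $S_n$ are bounded and periodic with period $k$, descending to a function on $\mathbb{Z}/k\mathbb{Z}$. Summation by parts, whose boundary term $S_M e^{-\varepsilon a^M}$ vanishes as $M \to \infty$ because $\varepsilon > 0$ forces $a^M \to \infty$, then yields
\[
  F(\varepsilon) = \sum_{n \ge 0} S_n \left( e^{-\varepsilon a^n} - e^{-\varepsilon a^{n+1}} \right).
\]

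Next I would reparametrize $\varepsilon = t\,a^{-N}$ with a fixed parameter $t > 0$ and a large integer $N$, so that each increment of the exponent becomes a shift of the summation index. Setting $\omega_m(t) := e^{-t a^m} - e^{-t a^{m+1}}$, the $n$-th term becomes $S_n\,\omega_{n-N}(t)$, and the substitution $m = n - N$ gives $F(t a^{-N}) = \sum_{m \ge -N} S_{N+m}\,\omega_m(t)$. The weights $\omega_m(t)$ are positive, sum to $1$ by telescoping, and decay geometrically as $m \to -\infty$ (where $\omega_m(t) \sim t(a-1)a^m$) and super-exponentially as $m \to +\infty$; in particular the family is summable over $\mathbb{Z}$. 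Fixing a residue $r$ and letting $N \to \infty$ through $N \equiv r \pmod{k}$, the partial sums satisfy $S_{N+m} = S_{(r+m)\bmod k}$ identically, so dominated convergence on $\mathbb{Z}$ (with dominating function $\|S\|_\infty\,\omega_m(t)$) gives
\[
  \lim_{\substack{N \to \infty \\ N \equiv r \pmod{k}}} F(t a^{-N}) = \Phi_r(t) := \sum_{m \in \mathbb{Z}} S_{(r+m)\bmod k}\,\omega_m(t),
\]
a convex combination of the finitely many values $S_0, \dots, S_{k-1}$.

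The heart of the argument, and the only place where ``$a$ large'' enters, is to show that $\Phi_r(t)$ genuinely depends on $t$. As $a \to \infty$ the distribution $\{\omega_m(t)\}_m$ concentrates on $m \in \{-1, 0\}$: one has $\omega_0(t) = e^{-t} - e^{-ta} \to e^{-t}$ and $\omega_{-1}(t) = e^{-t/a} - e^{-t} \to 1 - e^{-t}$, while the remaining mass $\sum_{m \ge 1}\omega_m(t) = e^{-ta}$ and $\sum_{m \le -2}\omega_m(t) = 1 - e^{-t/a}$ both tend to $0$, again by telescoping. Hence $\Phi_r(t) \to S_{r-1} + C(r)\,e^{-t}$ as $a \to \infty$, using $S_r - S_{r-1} = C(r)$. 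Choosing a residue $r$ with $C(r) \neq 0$ (possible since $C \not\equiv 0$) and two distinct values $t_1 \neq t_2$, we obtain $\Phi_r(t_1) - \Phi_r(t_2) \to C(r)\big(e^{-t_1} - e^{-t_2}\big) \neq 0$. Thus for all sufficiently large $a$ the limits $\Phi_r(t_1)$ and $\Phi_r(t_2)$ are distinct, so $F$ tends to different values along $\varepsilon = t_1 a^{-N}$ and $\varepsilon = t_2 a^{-N}$, and the radial limit cannot exist.

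I expect the main obstacle to be the quantitative control of this concentration: one must bound the error terms $e^{-ta}$, $1 - e^{-t/a}$ and the omitted tails uniformly below $|C(r)|$ for the chosen $t_1, t_2$, so that the perturbation of $\Phi_r$ away from $S_{r-1} + C(r)e^{-t}$ cannot destroy the strict inequality $\Phi_r(t_1) \neq \Phi_r(t_2)$. Conceptually this is exactly where exponential growth departs from polynomial growth: for $a$ close to $1$ the weights $\omega_m(t)$ would instead spread across many periods and average the $S_j$ to their common mean, which is how Lemma~\ref{lemmaMain} produces a genuine limit, whereas large $a$ narrows the averaging window below one period so that the residual oscillation survives.
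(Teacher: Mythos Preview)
Your proof is correct and takes a genuinely different route from the paper's. The paper decomposes $\sum C(n)q^{a^n}$ by residue class of $n$ modulo $k$ (using the vanishing mean of $C$), rewrites each inner sum $\sum_n q^{a^{nk+j}}-q^{a^{nk+k-1}}$ in the $q$-integral form $\sum_n q^{y_j(n)}\bigl(q^{x_j(n)}-q^{x_j(n+1)}\bigr)$, and then argues geometrically: the terms are areas of rectangles whose upper corners lie on two power curves $y=x^{M(a)}$ and $y=x^{m(a)}$, and for large $a$ one can trap a definite rectangle inside the union of $R_n$'s along one sequence $q_r\to 1^-$ while containing the $R_n$'s in a thin gnomon along another sequence $q'_r\to 1^-$, forcing distinct cluster points. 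You instead Abel-sum to the periodic partial sums $S_n$, obtaining a convex combination $F(\varepsilon)=\sum_n S_n\,\omega_{n-N}(t)$ after the self-similar substitution $\varepsilon=t a^{-N}$; passing to the limit along $N\equiv r\pmod k$ gives an explicit function $\Phi_r(t)$, and concentration of the weights $\omega_m(t)$ on $m\in\{-1,0\}$ as $a\to\infty$ shows $\Phi_r(t)\to S_{r-1}+C(r)e^{-t}$, which visibly depends on $t$ when $C(r)\neq 0$. Your approach is shorter and makes the mechanism transparent: the averaging window shrinks below one period so the oscillation of $S_n$ survives. The paper's approach, by contrast, keeps the $q$-integral picture in view and ties the result visually to Lemma~\ref{lemmaMain}, at the cost of a more intricate geometric construction (the auxiliary points $x_0,x_0'$ and the inequality \eqref{eqExpoExpoKillerIneq}).
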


\begin{proof}
 We decompose the sum as in the proof of Lemma \ref{lemmaMain}.  
 \begin{align*}
 %\label{eqExpoExpoDecomp}
  \sum_{n \geq 0} C(n) q^{a^{n}}
  = & \sum_{j = 0}^{k-1} C(j) \sum_{n \geq 0} q^{a^{nk+j}} \\
  = & \sum_{j = 0}^{k-2} C(j) \sum_{n \geq 0} q^{a^{nk+j}} - q^{a^{nk+k-1}} \\
  = & \sum_{j = 0}^{k-2} C(j) \sum_{n \geq 0} q^{y_j(n)} \left( q^{x_j(n)} - q^{x_j(n+1)} \right)
 \end{align*}
 If we assume $x_j(0) = (a^{k-1} - a^{j}) /(a^k - 1)$ for convenience, then 
 \begin{equation*}
 %\label{eqXTildeAndYTilde}
  x_j(n) = a^{nk}\frac{ (a^{k-1} - a^{j} ) }{ (a^k-1) }
  \qquad \textrm{and } \qquad
  y_j(n) = \frac{a^{nk} (a^{k+j} - a^{k-1} ) }{ (a^k-1) }
 \end{equation*}
 are uniquely determined.  
 Let us note that $y_j(n-1) = \frac{a^{nk} (a^{j} - a^{-1} ) }{ (a^k-1) }$.  
 Also set
 \begin{equation*}
 %\label{eqBigMAndSmallM}
  M(a) = \frac{ a^{k+j} - a^{k-1} }{ a^{k-1} - a^{j} }
  \qquad \textrm{and } \qquad
  m(a) = \frac{ a^{j} - a^{-1} }{ a^{k-1} - a^{j} }
 \end{equation*}
 for some fixed $j$.  
 
 The change of variable $q \leftarrow q^{(a^k - 1)/(a^{k-1} - a^{j})}$ will not change the limit because 
 $q \to 1^{-}$ and $q^{(a^k - 1)/(a^{k-1} - a^{j})} \to 1^{-}$ simultaneously, for fixed $a$.  
 
 For arbitrary but fixed $j = 0, 1, \ldots, k-2$, 
 we will first demonstrate the failure of 
 \begin{equation*}
 %\label{eqExpoExpoMainLimit}
  \lim_{q \to 1^{-}} \sum_{n \geq 0} q^{a^{kn} M(a)} \left( q^{a^{nk}} - q^{a^{nk+k}} \right)
 \end{equation*}
 to converge for large enough $a$.  
 
 Again, as in the proof of Lemma \ref{lemmaMain}, 
 the term $q^{a^{kn} M(a)} ( q^{a^{nk}} - q^{a^{nk+k}} )$ for any $n \geq 0$
 signifies the area of the rectangle $R_n$ with vertices 
 $(q^{ a^{nk} }, 0)$, $(q^{ a^{nk+k} }, 0)$, $(q^{ a^{nk} }, q^{ a^{nk} M(a) })$, and 
 $(q^{ a^{nk+k} }, q^{ a^{nk} M(a) })$ $= (q^{ a^{nk+k} }, q^{ a^{nk + k} m(a) })$.  
 
 Define $\varphi(t) = ( q^{ a^{kt} }, q^{ a^{kt} M(a) } )$, and 
 $\psi(t) = ( q^{ a^{kt} }, q^{ a^{kt} m(a) } )$ for $t \geq 0$.  
 Notice that $\varphi(t)$ coincides with the graph of $y = x^{M(a)}$, 
 and $\psi(t)$ with the graph of $y = x^{m(a)}$ for $0 < x \leq q$.  
 Consequently, the top right corners of all rectangles $R_n$ are on the curve $y = x^{M(a)}$, 
 and the top left corners are on $y = x^{m(a)}$.  
 
 For $a > 1$, $m(a) < 1 < M(a)$, so that $y = x^{M(a)}$ is below the main diagonal $x = y$, 
 and $y = x^{m(a)}$ is above it when $0 < x < 1$.  
 Moreover, $\lim_{a \to \infty} M(a) = \infty$ and $\lim_{a \to \infty} m(a) = 0$.  
 
 \begin{center}
  \includegraphics[scale=0.5]{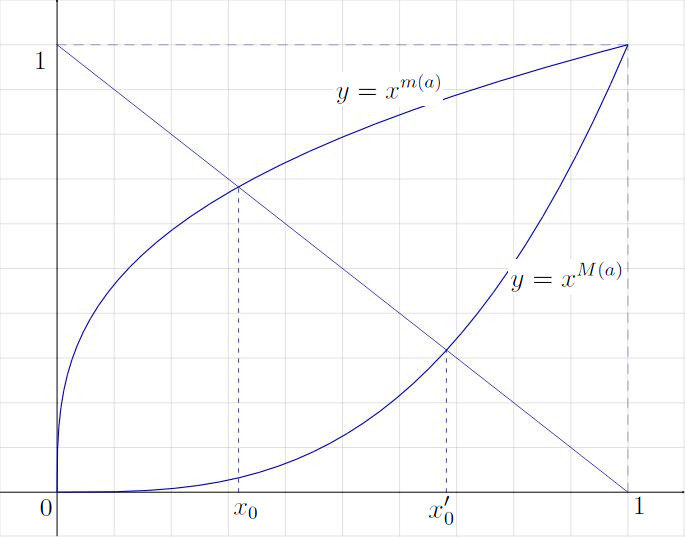}
 \end{center}

 Let $x_0$ be the unique number in $(0, 1/2)$ such that $x_0^{m(a)} = 1 - x_0$, 
 and $x'_0$ be the unique number in $(1/2, 1)$ such that $x'_0{}^{M(a)} = 1 - x'_0$.  
 Clearly 
 \begin{equation*}
 %\label{eqPeriodicExpoExpoMiscLims}
  \lim_{a \to \infty} x_0 = 0, \quad
  \lim_{a \to \infty} x_0^{m(a)} = 1, \quad
  \lim_{a \to \infty} x_0^{m(a)/M(a)} = 1, \quad
  \lim_{a \to \infty} x'_0 = 1, \quad
  \textrm{and } \;
  \lim_{a \to \infty} (x'_0)^{M(a)} = 0. 
 \end{equation*}
 
 \begin{center}
  \includegraphics[scale=0.5]{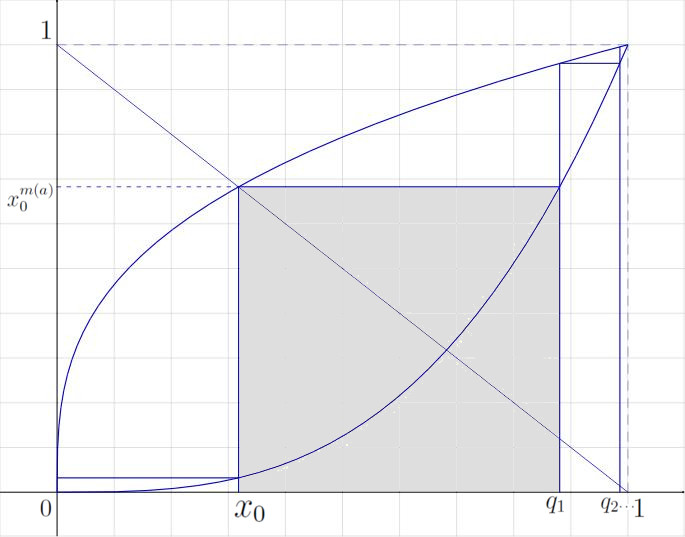}
 \end{center}

 On the other hand, let $q_1^{M(a)} = x_0^{m(a)}$, or $q_1 = x_0^{m(a)/M(a)}$,
 and in general $q_r^{M(a)} = q_{r-1}^{m(a)}$, or $q_r = x_0^{r m(a)/M(a)}$.  
 Then the rectangles $R_n$ in the figure contain the rectangle $[x_0, x_0^{m(a)/M(a)}] \times [0, x^{m(a)}]$, so
 \begin{equation*}
 %\label{eqExpoExpoIneq1}
  (x_0^{m(a)/M(a)} - x_0) x_0^{m(a)} 
  < \sum_{n \geq 0} q_r^{a^{kn} M(a)} \left( q_r^{a^{nk}} - q_r^{a^{nk+k}} \right)
 \end{equation*}
 for $r = 1, 2, \ldots$, and $\lim_{r \to \infty} q_r = 1$.  
 
 \begin{center}
  \includegraphics[scale=0.5]{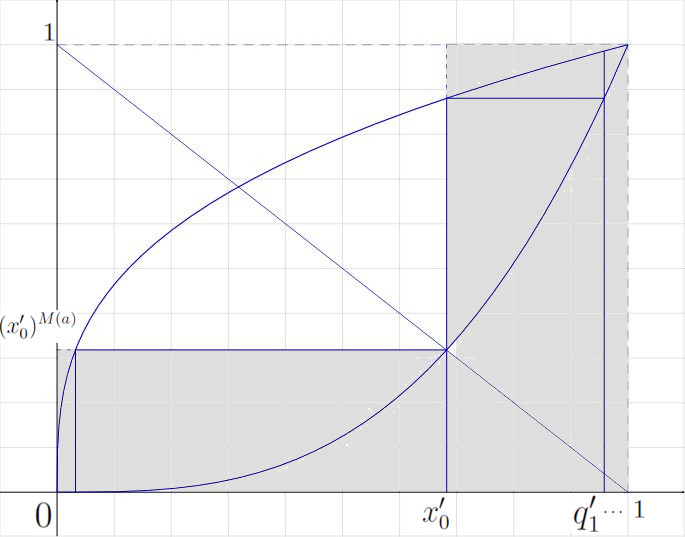}
 \end{center}

 Similarly, let $(q'_1)^{M(a)} = (x'_0)^{m(a)}$, or $q'_1 = (x'_0)^{m(a)/M(a)}$,
 and in general $(q'_r)^{M(a)} = (q'_{r-1})^{m(a)}$, or $q'_r = (x'_0)^{r m(a)/M(a)}$.  
 Then the rectangles $R_n$ in the figure are contained 
 in the gnomon $[0, 1] \times [0, 1] \backslash [ 0, x'_0) \times ( (x'_0)^{M(a)}, 1]$, so
 \begin{equation*}
 %\label{eqExpoExpoIneq2}
  \sum_{n \geq 0} (q'_r)^{a^{kn} M(a)} \left( (q'_r)^{a^{nk}} - (q'_r)^{a^{nk+k}} \right) 
  < 1 - x'_0(1 - (x'_0)^{M(a)})
 \end{equation*}
 for $r = 1, 2, \ldots$, and $\lim_{r \to \infty} q'_r = 1$.  
 
 Finally, choose $a$ large ehough so that 
 \begin{equation}
 \label{eqExpoExpoKillerIneq}
  1 - x'_0(1 - (x'_0)^{M(a)}) < (x_0^{m(a)/M(a)} - x_0) x_0^{m(a)}.  
 \end{equation}
 This means there are two subsequences $\{q_r\}$ and $\{q'_r\}$
 in $(0, 1)$ both converging to 1 and yielding distinct cluster points
 for the sum
 \begin{equation*}
 %\label{eqExpoExpoSum}
  \sum_{n \geq 0} q^{a^{kn} M(a)} \left( q^{a^{nk}} - q^{a^{nk+k}} \right).  
 \end{equation*}
 Consequently, 
 \begin{equation*}
 %\label{eqExpoExpoLimit}
  \lim_{q \to 1^{-}} \sum_{n \geq 0} q^{a^{kn} M(a)} \left( q^{a^{nk}} - q^{a^{nk+k}} \right)
 \end{equation*}
 cannot exist for large enuogh $a$.
 
 Now, because $j$ belongs to a finite set, 
 we can make the inequality \eqref{eqExpoExpoKillerIneq} work for all $j$, 
 by suitable selection of $a$.  
 Moreover, we can make the smaller side as close as we like to zero, 
 and the greater side as close as we like to 1.  
 
 Recall the decomposition 
 (after the change of variable $q \leftarrow q^{(a^k - 1)/(a^{k-1} - a^{j})}$)
 \begin{equation*}
 %\label{eqExpoExpoDecompChVar}
 \sum_{j = 0}^{k-2} C(j) \sum_{n \geq 0} q^{a^{kn} M(a)} \left( q^{a^{nk}} - q^{a^{nk+k}} \right).  
 \end{equation*}
 We have shown that the inner sum oscillates between two values, 
 one close to zero, and the other close to 1.  
 Without loss of generality, we can assume that $C(k-1) \neq 0$, 
 so that $\displaystyle \sum_{j = 0}^{k-2} C(j) \neq 0$.  
 This can be achieved by taking the non-zero $C(\tilde{k})$ with the largest index.  
 Therefore, values of the ultimate double series oscillate between 
 zero and $C(k-1)$ as $q \to 1^-$ for large enough $a$.  
 This concludes the proof.  
\end{proof}

The series in Theorem \ref{thmPeriodicExpoExpo} is an instance of 
\emph{lacunary series} or \emph{Mahler function}.  
Theorem \ref{thmPeriodicExpoExpo} is true for any $a$, 
as shown in \cite{ZudilinEtAl}, 
where a much deeper account of the behavior of those series is given.  
The elementary approach here provides a visual aid to understanding the ``periodicity" in the limit.  

% Numerical evidence suggests that Theorem \ref{thmPeriodicExpoExpo} is true in general.  
% 
% \begin{conj}
% \label{conjPeriodicExpoExpo}
%   Let $C: \mathbb{N} \to \mathbb{C}$ be a periodic function with mean zero.  Then, 
%   \begin{equation*}
%   %\label{eqConjPeriodicExpoExpo}
%     \lim_{q \to 1^{-}} \sum_{n \geq 0} C(n) q^{a^n}
%   \end{equation*}
%   does not exist for any $a > 1$.  
% \end{conj}

\section{Radial Limits}
\label{secRadial}

Given any real function $s(t)$ with $\lim_{t \to \infty} s(t) = \infty$, 
and a periodic function $C:\mathbb{N} \to \mathbb{C}$, 
The series
\begin{equation*}
%\label{eqRLGeneralSeries}
  \sum_{n \geq 0} C(n) q^{s(n)}
\end{equation*}
converges for $\vert q \vert < 1$ thanks to the ratio test.  

\begin{center}
 \includegraphics[scale=0.7]{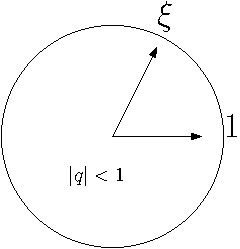}
\end{center}

If $\xi$ is any root of unity, we can take $q$ such that $q/\xi \in (0, 1)$
and consider
\begin{equation*}
%\label{eqRL}
  \lim_{q/\xi \to 1^{-}} \sum_{n \geq 0} C(n) q^{s(n)}, 
\end{equation*}
known as a radial limit.  
It reduces to 
\begin{equation*}
%\label{eqRLReduced}
  \lim_{q \to 1^{-}} \sum_{n \geq 0} C(n) \, \xi^{s(n)} \, q^{s(n)}
\end{equation*}
after a change of parameter making $q \in (0, 1)$.  

When $s(n)$ is a polynomial, $\widetilde{C}(n) = C(n) \, \xi^{s(n)}$ 
is still a periodic function with possibly a different period, 
and it does not have to have mean zero.  
If $\widetilde{C}(n)$ has mean zero, then Theorem \ref{thmPeriodicMeanZeroCoeffs} applies.  
For instance, 
\begin{equation*}
%\label{eqRLCompExist}
  \lim_{q/\omega \to 1^{-}} \sum_{n \geq 0} q^{n^3}
  = \lim_{q \to 1^{-}} \sum_{n \geq 0} \omega^n \, q^{n^3}
  = \lim_{q \to 1^{-}} 1 + \omega q + \omega^2 q^8 + q^{27} + \omega q^{64} + \cdots
  = - \omega - \frac{2}{3}, 
\end{equation*}
where $\omega$ is a third root of unity, or $1 + \omega + \omega^2 = 0$.  
But, 
\begin{equation*}
%\label{eqRLCompDNE}
  \lim_{q/\xi \to 1^{-}} \sum_{n \geq 0} (-1)^n q^{n^2}
  = \lim_{q \to 1^{-}} \sum_{n \geq 0} (-1)^n \, \xi^n \, q^{n^2}
  = \lim_{q \to 1^{-}} 1 - \xi q - \xi q^4 + q^{9} - \xi q^{16} - \xi q^{25} + \cdots
\end{equation*}
diverges for $\xi$ a primitive sixth root of unity.  

\section{Asymptotic Expansions}
\label{secAsymp}

We will use Euler's integral formula as given in Berndt's book \cite[Ch. 13, eq. (10.5)]{RNBII}.
\begin{align}
\label{eqEulerIntegralFormulaFinite}
  \sum_{n = a}^b f(n) = \int_a^b f(t) \, \mathrm{d}\,t \; 
  + \frac{f(a) + f(b)}{2}
  + \sum_{k=1}^{m-1} \frac{ B_{2k} }{ (2k)! } 
    \left\{ f^{(2k-1)}(b) - f^{(2k-1)}(a) \right\} + R_m, 
\end{align}
where
\begin{align*}
% \label{eqEulerIntegralFormulaError}
  R_m = \int_a^b \frac{ B_{2m} - B_{2m}(t - \lfloor t \rfloor) }{ (2m)! } f^{(2m)}(t) \, \mathrm{d} \, t, 
\end{align*}
$B_j$ are the Bernoulli numbers, and $B_j(t)$ are the Bernoulli polynomials \cite[p. 264]{ApANT}, 
and $\lfloor t \rfloor$ gives the integer part of $t$.  

For us, $f(n) = q^{s(n)}$ where $q \in (0, 1)$ and $s(n)$ is a real polynomial 
with positive degree $d$ and positive leading coefficient.  

Using Stirling's formula \cite[Sec. 1.4]{AAR}
\begin{align*}
% \label{eqStirling}
  n! \sim \sqrt{2 \pi n} \left( \frac{n}{e} \right)^n, 
\end{align*}
as $n \to \infty$, 
and estimates for the Bernoulli numbers and the Bernoulli polynomials when $t \in [0, 1]$
\begin{align*}
% \label{eqBernolliEstimates}
  \vert B_{2m} \vert \sim 4 \sqrt{\pi m} \left( \frac{m}{\pi e} \right)^{2m}, \qquad
  \vert B_{2m}(t) \vert \leq \frac{2 \zeta(2m) (2m)! }{ (2 \pi)^{2m} }
\end{align*}
as $m \to \infty$, and for all $m$, respectively \cite[Theorem 12.18]{ApANT}, \cite{Lehmer}; 
where $\zeta(\cdot)$ is Riemann Zeta function, 
we deduce that 
\begin{align*}
% \label{eqEulerIntegralFormulaErrorBound}
  \vert R_m \vert \stackrel{<}{\sim} 
  \left\vert \left( \frac{2 + 2\zeta(2m) }{ (2 \pi)^{2m} } \right)
    \int_a^b f^{(2m)}(t) \, \mathrm{d} \, t \right\vert.  
\end{align*}
Notice that $\zeta(2m)$ is bounded as $m$ grows.  
There is a more precise bound for the maximum value of the absolute value of Bernoulli polynomials
inside the interval $[0, 1]$ \cite{Lehmer}, but the above reasoning is enough for our purposes.  

When $s(t)$ is a real polynomial with positive degree $d$ and positive leading coefficient, 
\begin{align*}
% \label{eqSuccessiveDerivatives}
  f(t) = & q^{s(t)} = e^{(\log q)s(t)} \\
  f'(t) = & \left[ (\log q) s'(t) \right] f(t) \\
  f''(t) = & \left[ (\log q) s''(t) + (\log q)^2 (s'(t))^2 \right] f(t) \\
  f'''(t) = & \left[ (\log q) s'''(t) + 3 (\log q)^2 s'(t) s''(t) + (\log q)^3 (s'(t))^3 \right] f(t) \\
  & \vdots \\
  f^{(k)}(t) = & \left[ \sum_{r = 1}^k (\log q)^r 
    \sum_{c_1 + \cdots + c_r = k} a(c_1, \ldots, c_r) \prod_{j = 1}^r s^{(c_j)}(t) \right] f(t).  
\end{align*}
In the last expression, $c_j$'s are positive integers, 
and $a(c_1, \ldots, c_r)$ are integral coefficients that can be computed upon wish.  

When $k$ is large enough, $c_j > \lfloor \frac{k}{r} \rfloor$ for at least one $j$ in the inner sum, 
so that when $\lfloor \frac{k}{r} \rfloor > d$, the degree of $s(t)$, 
that term vanishes because $s^{(c_j)}(t) = 0$ identically.  
Thus, as $q \to 1^{-}$, $f^{(k)}(t) = O( (-\log q)^{r_0} )$, 
where $r_0 = \min_{r \in \mathbb{N}} \left\{ \lfloor \frac{k}{r} \rfloor \leq d \right\}$.  
This means, as $k \to \infty$, $r_0 \to \infty$ also.  

Finally, for $s(t)$ as above, and $p(t)$ any polynomial, 
it is straightforward to see that the intregral 
\begin{align*}
% \label{eqSlackIntegral}
  \int_0^\infty q^{s(t)} p(t) \, \mathrm{d} \, t
\end{align*}
converges for any $q \in (0, 1)$.  
Thus, under the said assumptions, 
\begin{align*}
% \label{eqSumAndIntegralsToConverge}
  \sum_{n \geq 0} q^{s(n)}, \qquad 
  \int_0^\infty q^{s(t)} \, \mathrm{d} t, \qquad
  \int_0^\infty \left( \frac{ \mathrm{d} }{ \mathrm{d} t } \right)^M q^{s(t)} \, \mathrm{d} t
\end{align*}
all converge.  
Therefore, Euler's integral formula \eqref{eqEulerIntegralFormulaFinite}
in our context remains valid when $a = 0$ and $b \to \infty$, and becomes
\begin{align}
  \nonumber
  \sum_{n \geq 0} q^{s(n)} 
  = & \int_0^\infty q^{s(t)} \,\mathrm{d}\,t\,
  + \frac{q^{s(0)}}{2} 
  - \sum_{k = 1}^{m-1} \frac{ B_{2k} }{ (2k)! } 
    \left[ \left( \frac{ \mathrm{d} }{ \mathrm{d} \, t } \right)^{2k-1} q^{s(t)} \right]_{t = 0} \\ 
\label{eqEulerIntegralFormulaInfinite}
  & + \int_0^\infty \frac{ B_{2m} - B_{2m}(t - \lfloor t \rfloor) }{ (2m)! }  
    \left[ \left( \frac{ \mathrm{d} }{ \mathrm{d} \, t } \right)^{2m} q^{s(t)} \right] \,\mathrm{d}\,t\,
\end{align}

In \eqref{eqEulerIntegralFormulaInfinite}, the fraction $q^{s(0)}/2$ and the finite sum on the right hand side
give series in integral powers of $(-\log q)$.  
The coefficients are calculable upon wish.  
The last integral on the right hand side, by the preceding discussion, 
is $O( (-\log q)^{r_0} )$ where $r_0 \to \infty$ as $m \to \infty$.  

As for the first integral on the right hand side of \eqref{eqEulerIntegralFormulaInfinite}, 
we write
\begin{align*}
% \label{eqSTExplicit}
  s(t) = a_d t^d + a_{d-1} t^{d-1} + \cdots + a_1 t + a_0
\end{align*}
for $d > 0$ and $a_d > 0$.  
\begin{align*}
% \label{eqIntegral1}
  \int_{0}^\infty q^{s(t)} \,\mathrm{d}\,t\, 
  = \int_0^\infty e^{-(-\log q) a_d t^d} 
    \; e^{-(-\log q) [a_{d-1} t^{d-1} + \cdots + a_1 t + a_0] } \,\mathrm{d}\,t\,
\end{align*}
after the change of variable 
\begin{align*}
% \label{eqChangeOfVar}
  u = (-\log q) a_d t^d, \qquad
  t = \frac{ u^{1/d} }{ (-\log q)^{1/d} a_d^{1/d} }, \qquad
  \mathrm{d} \, t = \frac{ u^{(1 - d)/d} }{ d (-\log q)^{1/d} a_d^{1/d} } \, \mathrm{d} \, u
\end{align*}
becomes
\begin{align*}
% \label{eqIntegral2}
  & \frac{1}{ d (-\log q)^{1/d} a_d^{1/d} }
  \int_0^\infty u^{(1 - d)/d} e^{-u} \; 
    e^{ \tilde{a}_{d-1} (-\log q)^{1/d} u^{(d-1)/d} + \cdots 
      + \tilde{a}_1 (-\log q)^{(d-1)/d} u^{1/d} + \tilde{a_0} (-\log q) } \; \mathrm{d} \, u \\
  & = \frac{1}{ d (-\log q)^{1/d} a_d^{1/d} }
  \int_0^\infty u^{(1 - d)/d} e^{-u} \; \\
  & \sum_{j \geq 0} \frac{ \left[\tilde{a}_{d-1} (-\log q)^{1/d} u^{(d-1)/d} + \cdots 
    + \tilde{a}_1 (-\log q)^{(d-1)/d} u^{1/d} + \tilde{a_0} (-\log q) \right]^j }{ j! }
  \; \mathrm{d} \, u \\
  & = \frac{1}{ d (-\log q)^{1/d} a_d^{1/d} } 
  \left[ \Gamma(1/d) + c_0 (-\log q)^{1/d} + c_1 (-\log q)^{2/d} + \cdots \right] 
\end{align*}
where $\tilde{a}_j = -a_j / a_d^{j/d} $ for $j = 0, 1, \ldots, d-1$.  

The above computations, together with Theorem \ref{thmPeriodicMeanZeroCoeffs} leads to the following.  

\begin{conj}
\label{conjAsympExp}
  Suppose $C : \mathbb{N} \to \mathbb{C}$ is a periodic function with period $k$.  
  Let $s(t)$ be a real polynomial with positive degree $d$ and positive leading coefficient $a_d$.  
  Then, 
  \begin{align*}
%   \label{eqConjAsympExp}
    & \sum_{n \geq 0} C(n) q^{s(n)} \sim \\
    & \begin{cases}
      \frac{-C(1) - 2C(2) - \cdots - (k-1)C(k-1)}{k} + c_1 (-\log q)^{1/d} + c_2 (-\log q)^{2/d} + \cdots
	& \textrm{if } C \textrm{ has mean zero},  \\
      \phantom{0} & \phantom{0} \\
      \frac{ (C(1) + \cdots + C(k)) \Gamma(1/d) }{ d (-\log q)^{1/d} a_d^{1/d} } + c_0 + c_1 (-\log q)^{1/d} + c_2 (-\log q)^{2/d} + \cdots
	& \textrm{otherwise}.  
    \end{cases}
  \end{align*}
  The coefficients $c_j$ are $\mathbb{Q}$-linear combinations of integers, 
  values of the Gamma function at various fractions $l/d$, and Bernoulli numbers $B_{2k}$.  
\end{conj}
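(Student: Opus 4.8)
The plan is to reduce the weighted series to the single--residue--class sums already handled in Section~\ref{secAsymp} by Euler's integral formula, and then to reassemble the pieces. First I would split the sum according to residues modulo $k$,
\begin{equation*}
  \sum_{n \geq 0} C(n)\, q^{s(n)} = \sum_{j=0}^{k-1} C(j) \sum_{m \geq 0} q^{s(mk+j)},
\end{equation*}
and note that for each fixed $j$ the map $m \mapsto s(mk+j)$ is again a real polynomial in $m$ of degree $d$ with positive leading coefficient $a_d k^d$. Thus the derivation of \eqref{eqEulerIntegralFormulaInfinite} applies verbatim to every inner sum $\sum_{m \geq 0} q^{s(mk+j)}$: its first integral contributes a term $\Gamma(1/d)\big/\big(d(-\log q)^{1/d}(a_d k^d)^{1/d}\big)$ together with a constant and a tail in the fractional powers $(-\log q)^{1/d}, (-\log q)^{2/d}, \dots$, while $q^{s(j)}/2$ and the finite Bernoulli sum contribute integer powers of $(-\log q)$.

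Next I would add these $k$ expansions with the weights $C(j)$ and collect like powers of $(-\log q)^{1/d}$. Because $(a_d k^d)^{1/d} = a_d^{1/d}\,k$, the most singular power $(-\log q)^{-1/d}$ acquires the coefficient $(C(1)+\cdots+C(k))\,\Gamma(1/d)\big/\big(k\,d\,a_d^{1/d}\big)$, i.e. the mean of $C$ times the leading term of the unweighted series; matching this against the displayed formula is a matter of tracking the factor $1/k$ inherited from the $k$--fold decomposition. When $C$ has mean zero this coefficient vanishes and the expansion begins at the constant level, whereas otherwise it is the dominant divergent term. The constant term is the delicate point: in the mean--zero case there are no divergent contributions, so the constant equals $\lim_{q \to 1^-}\sum_{n} C(n)q^{s(n)}$, which Theorem~\ref{thmPeriodicMeanZeroCoeffs} already evaluates to $\big(-C(1)-2C(2)-\cdots-(k-1)C(k-1)\big)/k$. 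Invoking that theorem pins the constant directly, sparing a term--by--term reconciliation of the Euler--Maclaurin constant of each residue class with the $q$--integral value.

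The arithmetic description of the coefficients $c_j$ would then follow by inspecting the two sources feeding the expansion. Expanding the exponential in the first integral and integrating against $u^{(1-d)/d}e^{-u}$ produces Gamma values $\Gamma(l/d)$ with rational prefactors assembled from the $\tilde a_i = -a_i/a_d^{i/d}$, and the finite sum in \eqref{eqEulerIntegralFormulaInfinite} supplies the Bernoulli numbers $B_{2k}$ with rational coefficients. Every combination is $\mathbb{Q}$--linear, and the period $k$ enters only through rational factors and powers, so the claimed shape of the $c_j$ survives the summation over residue classes.

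The step I expect to be the real obstacle is not this formal bookkeeping but the proof that the assembled series is a \emph{genuine} asymptotic expansion. Euler's formula yields, for each fixed number $m$ of Bernoulli terms, only a remainder of order $O\big((-\log q)^{r_0}\big)$ with $r_0 \to \infty$ as $m \to \infty$; one must therefore fix the truncation order first and let $q \to 1^-$ afterwards, and then check that this works uniformly over all $k$ residue classes at once, so that the $k$ remainders may be added without degrading the order. Making that uniformity precise, and excluding any hidden term of intermediate order between consecutive powers $(-\log q)^{l/d}$, is exactly the gap that leaves the statement a conjecture rather than a theorem.
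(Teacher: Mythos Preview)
The statement you are addressing is explicitly a \emph{conjecture} in the paper, not a theorem: immediately after discussing both the Euler--Maclaurin heuristic and a Mellin--transform approach, the author writes that there is ``an apparent mismatch'' between the two and leaves Conjecture~\ref{conjAsympExp} ``as unsettled.'' There is therefore no proof in the paper to compare your proposal against; there is only a heuristic derivation, and your outline is in the same spirit.

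Your route differs from the paper's in one organizational respect: the paper applies \eqref{eqEulerIntegralFormulaInfinite} directly to $\sum_{n\ge 0} q^{s(n)}$ and only afterwards brings in the periodic weight, whereas you first split into residue classes $\sum_{m\ge 0} q^{s(mk+j)}$ and apply Euler--Maclaurin to each. Both lead to the same formal shape, and your version has the pleasant feature that the mean--zero cancellation of the divergent $(-\log q)^{-1/d}$ term is visible immediately. Your identification of the genuine obstacle---uniform control of the $R_m$ remainders across residue classes so that the truncated sum really is $O\big((-\log q)^{r_0}\big)$---is exactly the gap the paper does not close.

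One point you pass over too quickly: your computation gives the leading coefficient as
\[
\frac{C(0)+\cdots+C(k-1)}{k}\cdot\frac{\Gamma(1/d)}{d\,a_d^{1/d}(-\log q)^{1/d}},
\]
i.e.\ the \emph{mean} of $C$, not the sum $C(1)+\cdots+C(k)$ appearing in the displayed conjecture. You call this ``a matter of tracking the factor $1/k$,'' but in fact your version is the one that is well defined under replacing $k$ by $2k,3k,\dots$, while the formula as printed is not (take $C\equiv 1$ with period $1$ versus period $2$). So rather than reconciling your answer with the display, you should flag that the conjecture as stated needs the factor $1/k$ in the leading term---the same well-definedness check the paper performs after Theorem~\ref{thmPeriodicMeanZeroCoeffs}.
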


One should note that the coefficients $c_j$ are not necessarily the same for the separate cases in the conjecture. 

When $s(n)$ has degree greater than 2, or when $C(n)$ does not have mean zero, 
Euler's integral formula indicates that the non-integral powers of $(-\log q)$ persist.  

Some specific cases of the conjecture are proven in literature.  
For instance, if $C(n)$ has mean zero, and $s(n)$ is a certain linear or quadratic polynomial, 
the expansion is given in \cite[p.98]{LZ}, 
where the half integral powers vanish.  
Indeed, Euler's formula also suggests that when $s(n)$ is an arbitrary quadratic polynomial 
and $C(n)$ has mean zero, the half integral powers vanish and we get a series 
in integral powers of $(- \log q)$.  
Ramanujan has a slightly different formula, 
a more general series where $C(n)$ is a polynomial, 
but $s(n)$ is a monomial with arbitrary degree \cite[Ch 15, Theorem 3.1]{RNBII}.  
There, also, the non-integral powers of $(-\log q)$ vanish, 
in accordance with Euler's integral formula.  
Again, we failed to find the $s(n) = $ an arbitraty polynomial case in the literature.  

One can try the Mellin transform approach in \cite[Ch. 15]{RNBII}.  
For convenience, we set $x = -\log q$, 
and we assume that $s(t)$ is a real polynomial with positive degree, 
yielding positive values for all non-negative numbers.  
In the integral, 
\begin{align*}
% \label{eqMellinTransf1}
  \int_0^{\infty} \sum_{n \geq 0} e^{-x s(n)} x^{\sigma -1} \, \mathrm{d} \, x
\end{align*}
notice that the inner sum is uniformly convergent, so that we can switch the order of summation and integration.  
Upon the change of variable $u = s(n) x$ for each $n$ in the inner sum gives
\begin{align*}
% \label{eqMellinTransf2}
  \sum_{n \geq 0} \frac{1}{s(n)^\sigma} \int_0^\infty e^{-u} u^{\sigma - 1} \, \mathrm{d} \, u
  = \Gamma(\sigma) \sum_{n \geq 0} \frac{1}{s(n)^\sigma}.  
\end{align*}
Let $\xi_1, \ldots, \xi_d$ be the roots of $s(n)$.  
Using the fundamental theorem of algebra and fractional decomposition, 
one gets a linear combination of Hurwitz zeta functions $\zeta(\sigma, \xi_j)$
(possibly $\zeta(2\sigma, \xi_j)$, $\zeta(3\sigma, \xi_j)$, etc. depending on the multiplicity of the roots).   
\begin{align*}
% \label{eqMellinTransf3}
  \sum_{n \geq 0} e^{-x s(n)} 
  = \frac{1}{2 \pi i} \int_{a-i\infty}^{a+i\infty} \Gamma(\sigma) x^{-\sigma}
    \sum_{j=1}^d \gamma_j \zeta(b_j \sigma, \xi_j) \, \mathrm{d} \, \sigma
\end{align*}
Here, $\gamma_j$'s are complex numbers, and $b_j$ are positive integers.  
The residue threorem seems to yield an asymptotic expansion in terms of integral exponents of $x$.  
There is an apparent mismatch between the suggestion of Euler's integral formula, and Mellin transform approach.  
We therefore leave Conjecture \ref{conjAsympExp} as unsettled.  

\section*{Acknowledgements}

The author owes sincere thanks for the useful discussions in the preparation of this manuscript to
George E. Andrews, Robert C. Rhoades, Bruce C. Berndt, 
Krishnaswami Alladi, Alexander Berkovich, Peter Paule, 
and Cem Y. Y{\i}ld{\i}r{\i}m.  

The author is also indebted to Wadim Zudilin for immediately pointing out \cite{ZudilinEtAl}
with some other references, 
and to the anonymous referee for making the proofs flawless.

\bibliographystyle{amsplain}

\end{document}